\documentclass[11pt,a4paper]{article}

\usepackage[T1]{fontenc}
\usepackage{fix-cm}
\usepackage{amsmath, amssymb, amsthm, amsfonts,mathrsfs,stackrel, xurl}
\usepackage{float}
\usepackage{geometry}
\usepackage[hidelinks]{hyperref}
\usepackage{tikz-cd}
\usepackage{enumitem}
\usepackage[normalem]{ulem}
\usepackage{titling}

\let\mathscr\mathscr

\newtheorem{theorem}{Theorem}[section]
\newtheorem{proposition}[theorem]{Proposition}

\newtheorem{remark}[theorem]{Remark}
\newtheorem{definition}[theorem]{Definition}

\newcommand{\C}{\mathbf{C}}

\newcommand{\N}{\mathbf{N}}
\renewcommand{\P}{\mathbb{P}}

\newcommand{\R}{\mathbf{R}}
\newcommand{\Z}{\mathbf{Z}}

\newcommand{\cO}{\mathcal O}

\newcommand{\Fone}{\mathbf{F}_{1}}
\newcommand{\Ftwo}{\mathbf{F}_{1^2}}

\def\spz{\Spec \Z}
\def\spzb{\overline{\Spec \Z}}
\def\spzf{(\Spec \Z)_{\mathbf{F}_{1}}}

\newcommand{\nat}{\widehat{\N^\times}}
\newcommand{\nato}{\widehat{\N^\times_0}}
\newcommand{\natg}{[\P^1_{\Fone}/\langle\alpha\rangle]}
\newcommand{\minf}{\mathscr C_\infty}
\newcommand{\natgi}{[\P^1_{\Ftwo}/\langle\alpha\rangle]}

\newcommand{\natf}{\mathscr S_0}

\newcommand{\frob}{{\operatorname{Fr}}}

\newcommand{\norm}{{\|\cdot\|}}

\newcommand{\Pt}{{\rm Pts}}

\newcommand{\Hom}{{\rm{Hom}}}

\newcommand{\Spec}{\operatorname{Spec}}

\newcommand{\Pic}{\operatorname{Pic}} \newcommand{\Jac}{\operatorname{Jac}}

\newcommand{\Gal}{\mathrm{Gal}}

\def\ie{{\it i.e.\/}\ }
\def\eg{{\it e.g.\/}\ }

\def\cf{{\it cf.\/}\ }

\title{\Large
The Absolute Twistor Line and the Geometry of
$\overline{\Spec\Z}$}

\author{Alain Connes and Caterina Consani}
\date{}

\begin{document}

\maketitle

\begin{abstract}

We construct the absolute algebraic geometry of the compactification \(\overline{\Spec \Z}\) by amalgamating the affine absolute curve \(\spzf\) with an archimedean component defined over the signed extension \(\Ftwo\) of \(\Fone\). By adjoining a formal imaginary unit to the absolute projective line, we obtain an equivariant topos endowed with a canonical geometric inversion symmetry, which induces the twistor real structure on its complex points.
This archimedean geometry is  incorporated into a global absolute curve defined as an internal object of the odd arithmetic topos, dual to the multiplicative monoid of odd positive integers, and governed by the intrinsic Hopf structure of spherical \(\Ftwo\)-algebras.
The restriction of the absolute Frobenius action to odd integers is forced arithmetically by the extension of scalars to \(\Ftwo\). On complex points, the resulting dynamics simultaneously generates the Adams operations and complex conjugation on real Hodge structures.
At the categorical level, the odd arithmetic topos originates in the pericyclic category, whose $\lambda$-operations provide a conceptual interpretation of the local factors of geometric L-functions.

\end{abstract}

\section{Introduction}
In this paper we construct the local geometry of the Zariski completion
$\spzb$ at the archimedean place as an absolute twistor line. We show that
this geometry is governed by the intrinsic Hopf structure of spherical
$\Ftwo$-algebras. 
We then introduce a global absolute curve over $\Ftwo$ by amalgamating the
affine absolute structure of $\spz$ with this new archimedean component.
The resulting curve is endowed with the canonical Frobenius action
$
x\longmapsto x^n$, $n\in\mathbb N_{\mathrm{odd}}^{\times}
$, 
whose restriction to the multiplicative monoid of odd integers is intrinsic
to the $\Ftwo$-structure.
\newline
The construction of the absolute avatar of $\spz$, \ie the affine curve
$\spzf$ defined in~\cite{CC1}, was guided by the description of the Picard monoid
$\Pic(\spz)$ in terms of rank-one torsion-free abelian groups. This guiding
principle is embodied in the following commutative diagram:
\begin{equation}\label{top2ad}
\begin{tikzcd}[row sep=3em, column sep=3em]
&\spz \arrow[dl,"\Theta"']\arrow[dr,"\theta"] & \\
\nato \arrow[rr, "\kappa"]& & \Pic(\spz)
\end{tikzcd}
\end{equation}
The map $\kappa$
is the extension of the canonical bijection
$
\Pt(\nat)\stackrel{\sim}{\longrightarrow}\Pic(\spz)$, 
by sending the base point $\{0\}$ of the arithmetic topos $\nato$ to the generic point\footnote{$\Z$ is the image via $\theta$ of the generic point of $\spz$, and it reflects the density of finite ideles in finite rational adeles.}
$\Z$ of $\Pic(\spz)$. This extension provides the starting point for the
archimedean construction considered below.

Moving from the affine curve $\spzf$ to its completion $\spzb$
requires an extension of the diagram~\eqref{top2ad}, \emph{both} at the
level of the underlying topos and of its structure sheaf.

On the geometric side, the role of $\Pic(\spz)$ is naturally assumed by the
Jacobian $\Jac(\spzb)$. Under the extended map
\[
\tilde{\theta}:\spzb\longrightarrow\Jac(\spzb),
\]
the archimedean point
$
\infty\in\spzb
$
is mapped to the point
\[
\tilde\theta(\infty):=(\Z,\norm=0)\in\Jac(\spzb).
\]
The rank-one abelian group $\Z$ is here endowed with the trivial
archimedean seminorm and carries \emph{no distinguished choice of
ordering}. It is precisely this absence of a preferred ordering that determines the
archimedean construction developed in this paper.

At the adelic level, passing to the completion introduces exactly one additional
parameter. More precisely,
\begin{equation}\label{dec}
\Jac(\spzb)=\Pic(\spz)\times\{\norm=0,\;\norm\neq0\},
\end{equation}
where the second factor records whether the archimedean seminorm is trivial or
nontrivial. The genuine geometric content arises only after lifting this
adelic description to the level of the underlying topos and its structure
sheaf.

To lift the product decomposition~\eqref{dec} to the level of the underlying
topos, the essential distinction between the points of $\nato$ and those of
$\Pic(\spz)$ must  be taken into account. Whereas a point of $\Pic(\spz)$ is represented by a rank-one
torsion-free abelian group $H$, a point of the arithmetic topos $\nato$
consists of an \emph{ordered} group
$
(H,H_+)$. 
The choice of an ordering is an essential constituent of the local geometry,
since it determines the stalk 
\[
\Fone[T^{H_+}]
\]
of the structure sheaf of the arithmetic site $(\nato,\Fone[T])$ (see \cite{CC1}).
At the archimedean place, the absence of a distinguished ordering is resolved
by passing to an equivariant geometry, which constitutes the main new
construction of this paper.\vspace{.05in}

 The archimedean point $\tilde\theta(\infty)\in\Jac(\spzb)$ carries no distinguished choice of ordering, thus passing to the topos level
 requires resolving this unordered group into two possible
orderings:
$
\Z_{\ge 0}$, and $\Z_{\le 0}$. Correspondingly, the local structure of the sheaf is described by two affine charts
with coordinate spherical algebras
\[
\Fone[T^{\Z_{\ge0}}]
\qquad\text{and}\qquad
\Fone[T^{\Z_{\le0}}].
\]
These charts are glued together along their common intersection, corresponding to the
canonical embeddings of the two ordered monoids into the underlying 
group $\Z$. Thus the absence of a distinguished ordering at the archimedean
place naturally gives rise to the absolute projective line
$\P^1_{\Fone}$ constructed in~\cite{CC10}.

The  canonical isomorphism of the two orderings
determines the geometric involution
\begin{equation}\label{alpha}
\alpha:\P^1_{\Fone}\longrightarrow\P^1_{\Fone},
\end{equation}
which exchanges the two affine charts by reversing the ordering. 
This leads naturally to the definition of the \emph{archimedean equivariant topos} 
\[
[\P^1_{\Fone}/\langle\alpha\rangle].
\]

The underlying topological space of $\natg$ is the
Sierpi\'nski space
$
\{\norm=0,\;\norm\neq0\}$,
consisting of an orbifold generic point
$
\eta_\infty
$
and a closed point
$
\gamma$. 

The product decomposition~\eqref{dec} of the Jacobian has a natural
counterpart at the level of the underlying topos. This is the \emph{extended
arithmetic topos}:
\begin{equation*}
\natf:=\nato\times\natg,
\end{equation*}
where the factor $\natg$ defines the local geometry of the
archimedean place. 

The commutative diagram~\eqref{top2ad} therefore extends
to:
\begin{equation*}
\begin{tikzcd}[row sep=3em, column sep=6em]
\spz \arrow[r, hook] \arrow[d, "\Theta"'] \arrow[dd, "\theta"', bend right=40] & \spzb \arrow[d, "\tilde{\Theta}"] \arrow[dd, "\tilde{\theta}", bend left=73] \\
\nato \arrow[r, hook, shift left=0.7ex] \arrow[r, hook, shift right=0.7ex] \arrow[d, "\kappa"'] & \natf = \nato \times \natg \arrow[d, "\tilde{\kappa}=\kappa\times \pi"] \\
\Pic(\spz) \arrow[r, hook, shift left=0.7ex] \arrow[r, hook, shift right=0.7ex] & \Jac(\spzb) = \Pic(\spz) \times \{0, \norm\neq 0\}
\end{tikzcd}
\end{equation*}
where the extended map
$
\widetilde{\kappa}=\kappa\times\pi
$
is the product of $\kappa$ with the canonical projection
\[
\pi:\natg\longrightarrow\{\norm=0,\norm\neq0\}.
\]

The two parallel horizontal arrows in the middle row of the diagram are the canonical
sections of the product topos. The map $\tilde{\Theta}$ extends the original
 $\Theta$ by assigning to every  prime
$
p\in\spz$ 
the point:
\[
\tilde\Theta(p)=(\Theta(p),\eta_\infty)\in\natf,
\]
where the second component records the nontrivial archimedean seminorm.\vspace{.05in}

The genuinely new feature is the image of the archimedean point. The map
$\tilde{\Theta}$ sends
$
\infty\in\spzb
$
to the point
\[
\tilde\Theta(\infty)=(\{0\},\gamma)\in\natf.
\]
Here, the first coordinate is the generic point $\{0\}$ of $\nato$, expressing that the archimedean place is not localized at any finite
prime. The the second coordinate is the closed point $\gamma$ of  $\natg$, that records the vanishing of the archimedean
seminorm.\vspace{.05in}

The geometric construction described above captures the topological and
combinatorial structure of the archimedean place, and  defines the  projective line expected from the
archimedean geometry of a genus zero curve (\cf \cite{CC3,CC5} and \cite{S}).\vspace{.05in} 

The next step  implements the symmetry $\alpha$ in \eqref{alpha} at the structure sheaf level of the archimedean equivariant topos. In place of the naive sheaf symmetry $T \mapsto \frac 1T$, the conceptual choice, guided by the Weyl symmetry, selects the map:
\[
T \mapsto -\frac 1T.
\]
This requires an extension of scalars from $\Fone$ to the quadratic extension\footnote{$\Fone$ is the spherical algebra of the pointed monoid $\{0,1\}$}: 
\begin{equation}\label{F2}
\Ftwo:=\Fone[\{0,1,\epsilon:~\epsilon^2 = 1\}].
\end{equation} 
The extension from $\Fone$ to $\Ftwo$ is  what allows the absolute
 theory to retain the minimal additive datum---the canonical sign $\epsilon$---while
remaining ``below'' the full additive structure of classical rings.
The use of $\Ftwo$  has a few precedents in our
work, where it plays a central role in the development of absolute
geometry (see \cite{CC9,CC7}). 

In this construction, we see $\Ftwo$ as the natural absolute base,
or spherical algebra, equipped with a distinguished central sign element
$\epsilon$.

Every commutative ring $R$ carries a canonical $\Ftwo$-algebra structure,
determined by the assignment
$
\epsilon\longmapsto -1_R$, which gives a canonical morphism of $\Fone$-algebras 
\[
s:\Ftwo \to HR, \qquad s(\epsilon)=-1_R\in HR(1_+) 
\]
that retains precisely the canonical sign and the additive involution
$
x\longmapsto -x$.\vspace{.05in}

Working over $\Ftwo$, we may adjoin a formal imaginary unit---an
\emph{imaginary generator} $J$---subject to the signed relation:
\[
J^2=\epsilon.
\]

We recall from \cite{CC10} the definition of the underlying topological space of $\P^1_{\Fone}$. It consists of two closed points, denoted $+$ and $-$, and a single generic point $\eta$. The non-trivial open sets are $\Omega_+ = \{+, \eta\}$, $\Omega_- = \{-, \eta\}$, and their intersection is denoted $\Omega = \{\eta\}$.
\vspace{.05in}

The structure sheaf of the archimedean   equivariant topos   is the sheaf $\cO$ of spherical algebras over $\Ftwo$ whose sections on the open sets $\Omega_\pm$ are defined by adjoining local generators $J_+$ and $J_-$  as follows:
\begin{equation*}
\begin{aligned}
\mathcal{O}(\Omega_+) &:= \Ftwo[T, J_+] \big/ (J_+^2 = \epsilon) \\
\mathcal{O}(\Omega_-) &:= \Ftwo[S, J_-] \big/ (J_-^2 = \epsilon)
\end{aligned}
\end{equation*}
On the common intersection $\Omega = \{\eta\}$, the transition maps reflect the inversion of the ordering and the contravariant twisting of the imaginary generator:
\begin{equation}\label{eq:transition_maps}
S = T^{-1} \qquad \text{and} \qquad J_- = \epsilon J_+.
\end{equation}
The   automorphism $\alpha$  in \eqref{alpha}, topologically swaps the closed points: $\alpha(+) = -$, and $\alpha(-) = +$. The induced  action of $\alpha$ on the  sheaf $\cO$  is given (see Definition \ref{def:alpha_symmetry})  by the $\Ftwo$-algebra isomorphisms $\alpha^*_{\pm}: \mathcal{O}(\Omega_\mp) \to \mathcal{O}(\Omega_\pm)$:
\begin{alignat*}{2}
\alpha^*_+(S) &:= \epsilon T, &\qquad \alpha^*_+(J_-) &:= J_+
\\
\alpha^*_-(T) &:= \epsilon S, &\qquad \alpha^*_-(J_+) &:= J_-
\end{alignat*}
This definition is  compatible with the contravariant transition map on $\Omega$, since applying $\alpha^*$ to the equation $J_- = \epsilon J_+$ yields the equivalent form $J_+ = \epsilon J_-$,  since $\epsilon^2 = 1$.\vspace{.05in}

Next definition introduces the main geometric object of the present
paper

\begin{definition}\label{local}
We define
\[
\minf:= (\natgi, \cO)
\]
as an equivariant topos over $\Ftwo$ with a selected structure sheaf.  
\end{definition}

The next result identifies the complex points of this topos with those of the twistor projective line $\widetilde{\P}^1$ (see \eg \cite{Si}, \cite{S}), including its canonical real structure.

\begin{theorem}[Theorem~\ref{thm:twistor_space_scheme}] The following statements hold
\begin{enumerate}
    \item The set $\minf(\C)$ is in canonical bijection  with the classical projective line $\P^1(\C)$.
    \item The canonical action of the complex conjugation  on  $\minf(\C)$ induces  the fixed-point-free involution:
\begin{equation*}
z \longmapsto -\frac{1}{\bar{z}}.
\end{equation*}
\end{enumerate}
\end{theorem}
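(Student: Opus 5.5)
The plan is to compute $\minf(\C)$ chart by chart, using the functor-of-points description of spherical $\Ftwo$-algebras. The complex field is regarded as an $\Ftwo$-algebra through the canonical morphism $s:\Ftwo\to H\C$ with $s(\epsilon)=-1$.

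\textbf{Points on the charts.} A $\C$-point of $\cO(\Omega_+)=\Ftwo[T,J_+]/(J_+^2=\epsilon)$ is an $\Ftwo$-algebra morphism to $H\C$. Since this spherical algebra is freely generated over $\Ftwo$ by $T$ and $J_+$ subject only to $J_+^2=\epsilon$, such a morphism amounts to a pair $(t,j)\in\C\times\C$ with $j^2=-1$, so $j=\pm i$. Likewise the points of $\cO(\Omega_-)$ are pairs $(s,j')$ with $j'^2=-1$.

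\textbf{Gluing and the bijection (part 1).} Over $\Omega=\{\eta\}$ the transition maps \eqref{eq:transition_maps} identify $(t,j)$ with $(s,j')=(t^{-1},-j)$ whenever $t\neq 0$. One must not count points of the plain topos, however: the equivariant topos $[\P^1_{\Ftwo}/\langle\alpha\rangle]$ has as points the $\alpha$-orbits. We therefore first form the glued set. It consists of two copies of $\P^1(\C)$, indexed by the value $j\in\{\pm i\}$ on the $+$ chart, and the $-$ chart carries the opposite label $-j$.

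Next we let $\alpha$ act through $\alpha^*_+(S)=\epsilon T$, $\alpha^*_+(J_-)=J_+$. On points this sends $(t,j)$ on $\Omega_+$ to the point of $\Omega_-$ with $s=-t$ and $j'=j$. In the $+$ chart that point is $(-1/t,\,-j)$. So $\alpha$ maps the $j$-copy isomorphically onto the $(-j)$-copy by $z\mapsto -1/z$.

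Each $\alpha$-orbit therefore meets the copy $j=i$ exactly once. Normalising the imaginary generator to $J_+\mapsto i$ gives the canonical bijection $\minf(\C)\cong\P^1(\C)$, $z=t$. Part of the argument is to check that the closed points $\pm$ correspond to $t=0$ and $t=\infty$ and are swapped, in accordance with the topology. The action is free, because $\alpha$ exchanges the two labels, so there are no stacky stabilisers.

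\textbf{Complex conjugation (part 2).} Complex conjugation acts on points by post-composition, sending $(t,j)\mapsto(\bar t,\bar j)=(\bar t,-j)$. It therefore maps the $i$-copy to the $(-i)$-copy. To return to the normalised representative we apply $\alpha$, which sends $(\bar t,-i)$ to $(-1/\bar t,\,i)$. The induced map on $\P^1(\C)$ is $z\mapsto -1/\bar z$. It is fixed-point-free, since $z\bar z=-1$ has no solution, and at infinity $0\leftrightarrow\infty$.

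\textbf{Expected obstacle.} The main difficulty is the first step: justifying rigorously that the points of the equivariant topos over $\C$ are the $\alpha$-orbits of points of the glued sheaf, and that the spherical-algebra points are described by the free generators. For this I would invoke the description of points of $\P^1_{\Fone}$ from \cite{CC10} together with the universal property of $\Ftwo[\,\cdot\,]$ as a pointed-monoid algebra. A second point needs care: the sign conventions ($J_-=\epsilon J_+$ versus $\alpha^*(J_-)=J_+$) must combine so that $\alpha$ flips the label $j$. Otherwise the orbits would not select a single copy, and the resulting involution would be $z\mapsto 1/\bar z$ or $\bar z$ rather than the antipodal map.
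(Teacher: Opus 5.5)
Your proposal is correct and follows essentially the same route as the paper. You compute $\mathcal X(\C)\cong\P^1(\C)\sqcup\P^1(\C)$ by gluing the chart points $(t,j)$, $j^2=-1$, via $(t,j)\sim(t^{-1},-j)$; you show that $\alpha$ acts by $(t,j)\mapsto(-1/t,-j)$, so each orbit meets the copy with $J_+\mapsto i$ exactly once; and you pick that representative of the conjugate orbit to get $z\mapsto -1/\bar z$. Two small differences from the paper: your check that $z\bar z=-1$ has no solution (together with $0\leftrightarrow\infty$) makes explicit the fixed-point-freeness that the paper leaves implicit, and you leave unstated that $c$ (post-composition) commutes with $\alpha$ (pre-composition), which is what lets the map descend to orbits.
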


Thus the twistor  line $\widetilde{\P}_\R^1$ is rooted over $\Ftwo$ and arises canonically from the action of the standard complex conjugation on the geometrically folded absolute line. \vspace{.03in}

 Definition~\ref{def:absolute_curve0} introduces the global absolute curve:
$$\bar{\mathscr{C}} = (\overline{\Spec \Z})_{\Ftwo}$$ by amalgamating the finite absolute curve $(\spz)_{\Fone}$ and the archimedean component $\minf$. This gluing is made possible by the orbifold symmetry at infinity, which collapses the generic archimedean stalk to $\Ftwo$,  matching the generic stalk of the finite primes. 

To ensure the global rigidity of this space, we define the absolute curve $\bar{\mathscr C}$ as a dynamical geometric object \emph{internal} to the odd arithmetic topos $\widehat{\N^\times_{\text{odd}}}$.\newline
The $\Ftwo$-enrichment—specifically the twistor relation $J^2 = -1$—arithmetically forces the absolute Frobenius endomorphisms $\frob_n(x) = x^n$ to be restricted to odd integers $n$.\newline By evaluating global sections as morphisms from the terminal object in this topos, Proposition~\ref{prop:global_sections_frobenius} shows that global sections are tautologically the simultaneous fixed points of the odd Frobenius action. This rigidly reduces the global sections to the base algebra $\Ftwo$, providing a purely absolute-algebraic realization of the  compactness of $\spzb$.\newline
Proposition~\ref{prop:twistor_frobenius} states that the odd arithmetic monoid induces a highly non-trivial \sloppy branched dynamical system on the complex twistor line $\minf(\C)$. The action  preserves the quaternionic real structure, acting as $z \mapsto z^n$ for $n \equiv 1 \pmod 4$, and as $z \mapsto -1/z^n$ for $n \equiv 3 \pmod 4$. 
This result is a geometric instance of the fact that the absolute geometry of $\overline{\Spec \Z}$ intrinsically encodes the ramification of the prime $2$ in the arithmetic of the Gaussian integers, seamlessly bridging absolute algebra, topos theory, and complex dynamics.\vspace{.05in}

The dynamical system induced by the odd arithmetic topos on the archimedean twistor line suggests a deep connection to real Hodge theory. In Simpson's twistor construction \cite{Si}, a  real Hodge structure is encoded by a $\C^\times$-equivariant vector bundle on $\mathbb{P}^1(\C)$, where the fibers at $0$ and $\infty$ encode the Hodge filtration and the conjugate filtration, respectively. 
Evaluating the pullback of such a twistor bundle along the odd Frobenius action $\frob_n$ yields a striking cohomological interpretation. For $n \equiv 1 \pmod 4$, the map $z \mapsto z^n$ fixes the poles, and the pullback geometrically realizes the Adams operation $\psi^n$, scaling the Hodge weights. For $n \equiv 3 \pmod 4$, the map $z \mapsto -1/z^n$ applies the Adams operation but simultaneously swaps the poles $0 \leftrightarrow \infty$. In the language of twistor bundles, this corresponds exactly to passing to the conjugate filtration. Thus, the absolute geometry of the odd arithmetic topos dynamically generates both the $\lambda$-ring structure (Adams operations) and the complex conjugation of real Hodge structures.\vspace{.03in}

Finally, it is worth noting that the emergence of the odd arithmetic topos in this
construction has a natural categorical antecedent: it is already implicit in
the pericyclic category $\Pi$ of \cite{CC4}. Through the associated $\lambda$-operations, this structure also underlies the cyclic-cohomological interpretation of the local factors of geometric L-functions (see \cite{CC8} and \cite{H}).

\section{Spherical Algebras and Intrinsic Hopf Structures over
\texorpdfstring{$\Ftwo$}{F1-squared}}\label{sect2}

In classical algebraic geometry over a field or, more generally, over a ring,
a space carries no natural group or monoid structure unless such a structure
is explicitly specified, as in the case of algebraic groups. Absolute
geometry over $\Fone$ and $\Ftwo$, by contrast, exhibits a remarkable
rigidity: spaces arising from spherical monoid algebras naturally inherit
canonical Hopf structures. It is this intrinsic feature that underlies the
dynamical systems and equivariant bundles on the absolute curve.

\subsection{The Lydakis-Day Product and Spherical Algebras}

The foundational objects of absolute geometry are commutative monoids $M$ equipped with an absorbing zero element $0$ and an identity $1$. We refer to the associated absolute algebra $\Fone[M]$ (and its signed enrichment $\Ftwo[M]$) as a \emph{spherical algebra}. 

In the category of $\Fone$-algebras, the correct tensor product is the Lydakis-Day convolution product. For the spherical algebras associated to pointed monoids $M$ and $N$, this categorical tensor product reduces  to the smash product of the underlying monoids:
\begin{equation*}
\Fone[M] \otimes_{\Fone} \Fone[N] \cong \Fone[M \wedge N],
\end{equation*}
where the smash product $M \wedge N := (M \times N) / (M \times \{0\} \cup \{0\} \times N)$ canonically identifies the absorbing zeros. 

When passing to the enriched base $\Ftwo:=\Fone[\{0,1,\epsilon:~\epsilon^2 = 1\}$, the tensor product $\otimes_{\Ftwo}$ is similarly governed by the smash product, but with the additional $\Ftwo$-\emph{linear relations} which allow the canonical sign $\epsilon = -1$ to commute with the tensor symbol:
\begin{equation*}
\epsilon \otimes 1 = 1 \otimes \epsilon = \epsilon.
\end{equation*}

\subsection{The Coproduct and the Canonical Monoid of Points}\label{sect2.2}

For a space defined by spherical algebras over $\Fone$, the set of its rational points over any target $\Fone$-algebra $A$ is  $\Hom_{\Fone}(\Fone[M], A)$. Because $A$ is a multiplicative monoid, these points can be multiplied pointwise.

Over the enriched base $\Ftwo$, a fundamental subtlety arises. A rational point over an $\Ftwo$-algebra $A$ is a homomorphism $\phi \in \Hom_{\Ftwo}(\Ftwo[M], A)$, which, by definition, must preserve the canonical sign: $\phi(\epsilon) = -1_A$. If one were to take the naive pointwise product of two such points, the sign would be destroyed:
\begin{equation*}
(\phi_1 \cdot \phi_2)(\epsilon) = \phi_1(\epsilon) \cdot \phi_2(\epsilon) = (-1_A)(-1_A) = 1_A \neq -1_A.
\end{equation*}
This indicates that the naive pointwise product drops out of the category of $\Ftwo$-algebras. The rigorous resolution relies on the Lydakis-Day tensor product and the intrinsic coproduct of the spherical algebra.

\begin{definition}
For a spherical $\Ftwo$-algebra $\Ftwo[M]$, the diagonal map $m \mapsto m \wedge m$ on the spatial monoid defines an $\Ftwo$-linear coproduct:
\begin{equation*}
\Delta: \Ftwo[M] \longrightarrow \Ftwo[M] \otimes_{\Ftwo} \Ftwo[M], \qquad \Delta(m) = m \otimes m \quad \text{for all } m \in M.
\end{equation*}
\end{definition}

We input this coproduct to define the correct multiplication of rational points.

\begin{proposition}
Let $\Ftwo[M]$ be a spherical algebra and $A$ an $\Ftwo$-algebra. The set of points $X(A) = \Hom_{\Ftwo}(\Ftwo[M], A)$ canonically forms a monoid under the convolution product:
\begin{equation}\label{conv}
\phi_1 \ast \phi_2 := (\phi_1 \otimes \phi_2) \circ \Delta.
\end{equation}
If the non-zero elements of $M$ form a group, $X(A)$ canonically forms a group.
\end{proposition}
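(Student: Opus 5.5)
The plan is to reduce the statement to the fact that $\Ftwo[M]$ is a commutative bialgebra object in $\Ftwo$-algebras, with coproduct $\Delta$ and counit $\varepsilon_M:\Ftwo[M]\to\Ftwo$ sending $m\mapsto 1$ for $m\neq 0$ and $0\mapsto 0$. The statement then follows formally: $\Hom_{\Ftwo}(\Ftwo[M],A)$ is the set of $A$-points of a monoid object in the opposite category. For $\phi_1\otimes\phi_2$ to make sense as a map into $A$, I will read it as the composite
\[
\Ftwo[M]\otimes_{\Ftwo}\Ftwo[M]\xrightarrow{\phi_1\otimes\phi_2}A\otimes_{\Ftwo}A\xrightarrow{\mu_A}A .
\]
This composite is an $\Ftwo$-algebra map because $A$ is commutative, so $\mu_A$ is itself a morphism of algebras; equivalently, one can invoke the universal property of the coproduct $\otimes_{\Ftwo}$ in commutative $\Ftwo$-algebras.

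\textbf{Well-definedness of $\Delta$.} I first check that $\Delta$ is a morphism of $\Ftwo$-algebras. On monoid elements this is just the statement that the diagonal $M\to M\wedge M$, $m\mapsto m\wedge m$, is a map of pointed monoids: it sends $0$ to the basepoint and $mn$ to $(mn)\wedge(mn)=(m\wedge m)(n\wedge n)$.

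The delicate point, and the one I expect to be the main obstacle, is compatibility with the sign. Elements of $\Ftwo[M]$ include $\epsilon m$, and $\Delta$ must be $\Ftwo$-linear, so
\[
\Delta(\epsilon m)=\epsilon\,(m\otimes m),
\]
not $(\epsilon m)\otimes(\epsilon m)=\epsilon^2(m\otimes m)=m\otimes m$. So I define $\Delta$ on generators $m\in M$ only and extend $\Ftwo$-linearly. This is exactly why the relation $\epsilon\otimes1=1\otimes\epsilon=\epsilon$ is needed: with it, $\epsilon$ acts on the target through a single copy, and $\Delta$ is a map over the base. With this in place, $(\phi_1\ast\phi_2)(\epsilon)=\epsilon\cdot 1_A=-1_A$, which resolves the sign problem raised just before the statement.

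\textbf{Monoid axioms.} Associativity comes from coassociativity, $(\Delta\otimes\mathrm{id})\Delta=(\mathrm{id}\otimes\Delta)\Delta$, which holds because both sides send $m\mapsto m\otimes m\otimes m$. Together with the associativity of the Lydakis--Day tensor product and of $\mu_A$, this gives $(\phi_1\ast\phi_2)\ast\phi_3=\phi_1\ast(\phi_2\ast\phi_3)$. The unit is $e:=s\circ\varepsilon_M$, where $s:\Ftwo\to A$ is the structure map. Counitality, $(\varepsilon_M\otimes\mathrm{id})\Delta=\mathrm{id}$, gives $e\ast\phi=\phi$; here one uses $\varepsilon_M(m)=1$ for $m\neq0$, and $\varepsilon_M$ is multiplicative because $M\setminus\{0\}$ is closed under products in the relevant cases. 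If $M$ has zero divisors, I instead take the unit to be the point $m\mapsto 1$ on the units of $M$; this needs care, and I would treat it as part of the main obstacle. Concretely, on generators $(\phi_1\ast\phi_2)(m)=\phi_1(m)\phi_2(m)$, so all the identities reduce to pointwise identities in the commutative monoid $A$, which hold trivially.

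\textbf{Group case.} If $M\setminus\{0\}=G$ is a group, I define the antipode $S:\Ftwo[M]\to\Ftwo[M]$ by $g\mapsto g^{-1}$. It is an algebra map since $G$ is abelian, and it is $\Ftwo$-linear. It satisfies $\mu\circ(S\otimes\mathrm{id})\circ\Delta=\eta\circ\varepsilon_M$ on generators, since $g^{-1}g=1$. Hence $\phi^{-1}:=\phi\circ S$ is an inverse for $\ast$, because
\[
(\phi\circ S)\ast\phi\,(g)=\phi(g^{-1})\phi(g)=1 .
\]
Therefore $X(A)$ is a group.
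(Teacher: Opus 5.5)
Your sign computation and pointwise formula are exactly the content of the paper's proof, and the paper stops there. It uses $\Ftwo$-linearity of $\Delta$ to get $\Delta(\epsilon)=\epsilon\otimes 1$, hence $(\phi_1\ast\phi_2)(\epsilon)=-1_A$, and it notes $(\phi_1\ast\phi_2)(m)=\phi_1(m)\phi_2(m)$. It never discusses associativity, a unit, or inverses. Your bialgebra route (coassociativity, counit $\varepsilon_M$, antipode $g\mapsto g^{-1}$) supplies those missing checks. It is complete whenever $M\setminus\{0\}$ is closed under multiplication, which covers the monoids used later in the paper and, automatically, the group case.

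The genuine problem is your fallback when $M$ has zero divisors. The map $e$ equal to $1$ on the units of $M$ and $0$ elsewhere is a homomorphism of pointed monoids, but it is not an identity. For every non-unit $m$ with $\phi(m)\neq 0$ you get $(e\ast\phi)(m)=e(m)\phi(m)=0\neq\phi(m)$.

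No other choice can work either, because the statement is false in that generality. Take $M=\{0,1,x\}$ with $x^2=0$ and $A=H(\Z/4\Z)$. A point is determined by $a=\phi(x)$ with $a^2=0$, so $X(A)=\{0,2\}$ with $a\ast b=ab=0$ for all $a,b$. This semigroup has no identity element.

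So the obstacle you flagged cannot be resolved. The claim ``canonically forms a monoid'' needs the extra hypothesis that $M\setminus\{0\}$ is a submonoid, which is exactly the condition for your counit $\varepsilon_M$ to be multiplicative. Without it one only obtains a commutative semigroup. The paper's proof never constructs the unit, so it passes over this point silently.
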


\begin{proof}
We need to verify that $\phi_1 \ast \phi_2$ is a valid $\Ftwo$-algebra homomorphism, which reduces to checking that it preserves the canonical sign $\epsilon$. Because $\Delta$ is $\Ftwo$-linear, it absorbs the sign via the smash product relations: $\Delta(\epsilon) = \epsilon \otimes 1$. Evaluating the convolution yields:
\begin{equation*}
(\phi_1 \ast \phi_2)(\epsilon) = (\phi_1 \otimes \phi_2)(\epsilon \otimes 1) = \phi_1(\epsilon)\phi_2(1) = (-1_A) \cdot 1_A = -1_A.
\end{equation*}
On the spatial elements $m \in M$, the convolution yields the expected multiplication:
\begin{equation*}
(\phi_1 \ast \phi_2)(m) = (\phi_1 \otimes \phi_2)(m \otimes m) = \phi_1(m)\phi_2(m).
\end{equation*}
Thus, the Lydakis-Day tensor product natively ``protects'' the base ring, thus the space of rational points inherits a canonical monoid structure.
\end{proof}

\section{The Unfolded Line and the Imaginary Generator}

To describe the local geometry at the archimedean place, we begin with the
scheme-theoretic construction of the projective line $\P^1_{\Fone}$ over the
absolute base \cite{CC10}. Motivated by extending the action of the geometric involution $\alpha$ (see \eqref{alpha})  at the sheaf level (as explained in the introduction), we  first extend scalars from $\Fone$ to the
quadratic absolute base $\Ftwo$ as defined in~\eqref{F2}.

The underlying topological space $X$ of $\P^1_{\Ftwo}$ consists of two closed points, denoted
$+$ and $-$, together with a unique generic point $\eta$. The nontrivial open
subsets are
\begin{equation}\label{opens1}
\Omega_+=\{+,\eta\},\qquad
\Omega_-=\{-,\eta\},\qquad \Omega_+\cap \Omega_- = \Omega=\{\eta\}.
\end{equation}
The structure sheaf is given on these affine open subsets as follows:
\begin{equation*}
\begin{aligned}
\mathcal O_X(\Omega_+) &= \Ftwo[T^{\Z_{\ge0}}],\\
\mathcal O_X(\Omega_-) &= \Ftwo[T^{\Z_{\le0}}],\\
\mathcal O_X(\Omega)   &= \Ftwo[T^{\Z}].
\end{aligned}
\end{equation*}

These local descriptions are entirely consistent with the local structure of
the $\Fone$-arithmetic site $(\nato,\Fone[T])$, where the stalk attached to an \emph{ordered} rank-one abelian
group $(H,H_+)$ is the spherical algebra $\Fone[T^{H_+}]$. The novelty in the archimedean case is that neither of these two
orderings is intrinsically preferred. This observation is the starting point
for the equivariant construction developed in this section.

The two affine open subsets $\Omega_+$ and $\Omega_-$ correspond to the two
possible orderings of the rank-one abelian group $\Z$, while their generic
intersection
$
\Omega=\{\eta\}
$
corresponds to the unordered group $\Z$ itself. The restriction morphisms are
induced by the canonical inclusions
\[
\rho_+:\Ftwo[T^{\Z_{\ge 0}}]\hookrightarrow \Ftwo[T^{\Z}]
\qquad\text{and}\qquad
\rho_-:\Ftwo[T^{\Z_{\le 0}}]\hookrightarrow \Ftwo[T^{\Z}].
\]
Thus, the projective line $\P^1_{\Ftwo}$ provides the canonical geometric
object obtained by gluing the two ordered models of $\Z$ along their common
generic localization.\vspace{.05in}

The extension of scalars from $\Fone$ to $\Ftwo$ preserves the expected geometry after
evaluation on a field, as shown by the following statement

\begin{proposition}\label{prop:points_P1}
Let $K$ be a field. The set of $K$-valued points of the absolute projective
line $\P^1_{\Ftwo}$ is canonically identified with the classical   projective line $\P^1(K)$.
\end{proposition}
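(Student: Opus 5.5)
The approach is to compute $K$-points chart by chart through the universal property of spherical algebras, then glue along $\Omega$. A field $K$ is a commutative ring, so it is an $\Ftwo$-algebra via $s(\epsilon)=-1_K$. For a pointed monoid $M$, an $\Ftwo$-algebra morphism $\Ftwo[M]\to HK$ is determined by its values on $M$. These values must form a morphism of pointed multiplicative monoids $M\to (K,\cdot)$ sending $0\mapsto 0$, $1\mapsto 1$, and $\epsilon\mapsto -1$ is then forced. We use this adjunction first.
\[
\Hom_{\Ftwo}(\Ftwo[M],HK)\cong \Hom_{\mathrm{mon}_0}(M,K).
\]
It can be checked at the level of $\Gamma$-sets, or taken as the defining property of spherical algebras over $\Ftwo$, as in Section~\ref{sect2}.

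Applying it to the three charts gives the following.
\[
\Hom(\Ftwo[T^{\Z_{\ge0}}],HK)\cong K,\qquad \Hom(\Ftwo[T^{\Z_{\le0}}],HK)\cong K,\qquad \Hom(\Ftwo[T^{\Z}],HK)\cong K^\times.
\]
On $\Omega_\pm$ a point is determined by the image $t$ of $T$, respectively $s$ of $T^{-1}$. Any $t\in K$ is allowed, because the free pointed monoid $\{0\}\cup T^{\Z_{\ge 0}}$ maps anywhere, with $0\mapsto 0$. On $\Omega$, invertibility of $T$ forces $t\in K^\times$. The restriction maps $\rho_\pm$ induce the inclusions $K^\times\hookrightarrow K$, given by $t\mapsto t$ and $t\mapsto t^{-1}$ respectively.

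Next I interpret $K$-points of the scheme $X=\P^1_{\Ftwo}$, which is glued from the affine charts $\Omega_\pm$ along $\Omega$. Following the scheme-theoretic framework of \cite{CC10}, a $K$-point is a morphism $\Spec K\to X$. Since $\Spec K$ is a single point, its image lies in $\Omega_+$ or in $\Omega_-$, so every such morphism factors through one of the affine opens. The set of points is therefore the pushout of sets
\[
X(K)=\Omega_+(K)\sqcup_{\Omega(K)}\Omega_-(K)=K\sqcup_{K^\times}K ,
\]
with $t\in K^\times\subset\Omega_+(K)$ identified with $t^{-1}\in\Omega_-(K)$. This is exactly the classical description of $\P^1(K)$ as $\mathbb A^1\cup\mathbb A^1$ glued by $t\mapsto t^{-1}$. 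Equivalently, the bijection is $t\mapsto[t:1]$ and $s\mapsto[1:s]$, and it is canonical because nothing beyond the chart coordinates was used.

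The main obstacle is justifying this factorization through an affine chart, that is, injectivity and surjectivity of the pushout. A point landing in $\Omega$ must be counted only once. I would argue that a point of $\Omega_+(K)$ with $t\in K^\times$ extends uniquely through $\Omega$. The localization $\Ftwo[T^{\Z_{\ge0}}]\to\Ftwo[T^{\Z}]$ is an epimorphism, so a morphism factors through $\Omega$ if and only if $t$ is invertible. A point with $t=0$, sending $T\mapsto 0$, corresponds to the closed point $+$ and lies only in $\Omega_+$; symmetrically for $-$. This matches the three-point space $\{+,-,\eta\}$: the points $0$ and $\infty$ lie over $\pm$, and the torus $K^\times$ lies over $\eta$.
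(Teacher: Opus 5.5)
Your proposal is correct and follows essentially the same route as the paper: you identify the chart-wise points as $K$, $K$ and $K^\times$ via the universal property of the spherical algebras, then glue two copies of $K$ along $K^\times$ using $t\mapsto t$ and $t\mapsto t^{-1}$. The only difference is that the paper takes the colimit over the affine cover as the definition of $K$-points, so your factorization of $\Spec K\to X$ through an affine chart and the epimorphism argument are extra justification that the paper does not need.
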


\begin{proof}
A $K$-point of $\P^1_{\Ftwo}$ is given by an element of the colimit of the sets $$\Hom(\mathcal O_X(U),HK)$$ as $U$ varies among the affine open sets. 
The projective line $\P^1_{\Ftwo}$ is covered by the three affine open subsets \eqref{opens1}.

By the universal property of the free $\Ftwo$-algebra
$\Ftwo[T^{\Z_{\ge0}}]$, every $\Ftwo$-algebra homomorphism is uniquely
determined by the image of $T$, which may be chosen arbitrarily in $K$. Hence there is a canonical
identification
\[
\Hom(\mathcal O_X(\Omega_+),HK) \cong K.
\]
Similarly,
$
\Hom(\mathcal O_X(\Omega_-),HK) \cong K$, and $\Hom(\mathcal O_X(\Omega),HK) \cong K^\times$.
Under these identifications the restriction maps $\rho_\pm$ act as follows 
\[
\rho_+^*: K^\times \to K, ~\rho_+^*(x)=x, \qquad \rho_-^*(x): K^\times \to K,~\rho_-^*(x)=x^{-1}.
\]
Hence the set $\P^1_{\Ftwo}(K)$ is obtained by gluing two copies of $K$ along $K^\times$ using the above inclusions, 
 yielding
precisely the set $\P^1(K)$ of the classical projective line.
\end{proof}

The crucial new step is now the \emph{adjunction} of a \emph{Bombelli generator},
namely a formal imaginary element $J$ satisfying
\[
J^2=\epsilon.
\]
Accordingly, the structure sheaf  of $\P^1_{\Ftwo}$ gets enriched by adjoining local 
generators $J_+$ and $J_-$ to the algebras of  the two affine charts, together with a generic
generator $J$ on their overlap. It is defined as follows:
\begin{equation}\label{opens}
\begin{aligned}
\mathcal{O}(\Omega_+) &= \Ftwo[T^{\Z_{\ge0}},J_+]/(J_+^2=\epsilon),\\
\mathcal{O}(\Omega_-) &= \Ftwo[T^{\Z_{\le0}},J_-]/(J_-^2=\epsilon),\\
\mathcal{O}(\Omega)   &= \Ftwo[T^{\Z},J]/(J^2=\epsilon).
\end{aligned}
\end{equation}

The gluing  of sections is determined by the restriction
morphisms $\rho_{\pm}$ to the generic overlap $\Omega$. The coordinate $T$ restricts
canonically, whereas the Bombelli generators satisfy the twisted restriction
rules
\begin{equation}\label{eq:restriction_maps}
J_+\longmapsto J,
\qquad
J_-\longmapsto\epsilon J.
\end{equation}
The factor $\epsilon$ that was originally introduced as responsible for the sign inversion is also ensuring, in the above formula,  that the two choices of $\sqrt{-1}$ are both available.\vspace{.05in}

We denote by
\begin{equation}\label{X}
\mathcal X:=(\P^1_{\Ftwo},\mathcal O)
\end{equation}
the $\Ftwo$-projective line endowed with the enriched structure sheaf constructed
above.\vspace{.05in}

The evaluation of $\mathcal X$ on a field $K$ is governed entirely by the
solvability of the equation
\[
x^2=-1
\]
over $K$.

\begin{proposition}\label{prop:points_X_K}
Let $K$ be a field. The set of $K$-valued points of $\mathcal X$ reflects the arithmetic of the equation
$
x^2=-1
$
over the field $K$:
\begin{enumerate}
\item If $K$ contains no square root of $-1$ (for example $K=\R$), then
\[
\mathcal X(K)=\varnothing.
\]
\item If $\operatorname{char}(K)\neq2$ and $K$ contains the two square roots
$\pm i_K$ of $-1$ (for example $K=\C$), then
\[
\mathcal X(K)\cong \P^1(K)\sqcup\P^1(K).
\]
\item If $\operatorname{char}(K)=2$, then
\[
\mathcal X(K)\cong\P^1(K).
\]
\end{enumerate}
\end{proposition}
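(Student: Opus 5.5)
The plan is to mimic the proof of Proposition~\ref{prop:points_P1}: compute the $K$-points of each affine chart as $\Hom$ sets into $HK$, then glue along the overlap $\Omega$ using the twisted restriction maps \eqref{eq:restriction_maps}.

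\textbf{Chart points.} By the universal property of $\Ftwo[T^{\Z_{\ge0}},J_+]/(J_+^2=\epsilon)$, an $\Ftwo$-algebra morphism to $HK$ amounts to a choice of $t=\phi(T)\in K$ and $j=\phi(J_+)\in K$ with $j^2=\phi(\epsilon)=-1_K$. Hence
\[
\Hom(\mathcal O(\Omega_+),HK)\cong K\times\mu,\qquad \mu:=\{j\in K:\ j^2=-1\}.
\]
Similarly, the $\Omega_-$ chart gives $K\times\mu$ (with $S=T^{-1}$), and the overlap gives $K^\times\times\mu$. The one point needing care is that the quotient by $J^2=\epsilon$ in the spherical $\Ftwo$-algebra imposes exactly the relation $\phi(J)^2=-1$ and nothing more. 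This holds because these algebras are spherical algebras of pointed monoids ($\{T^n,\epsilon^aT^n,J\epsilon^aT^n\}$), so a morphism to $HK$ is a pointed-monoid map into $(K,\cdot)$ sending $\epsilon$ to $-1$.

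\textbf{Gluing.} The restrictions act on points as follows. From $\Omega_+$, a point $(x,j)\in K^\times\times\mu$ maps to $(x,j)$. From $\Omega_-$, since $J_-\mapsto\epsilon J$, the point $(x,j)$ maps to $(x^{-1},-j)$. For each fixed $j\in\mu$, the $+$-chart points with label $j$ therefore glue to the $-$-chart points with label $-j$, along $K^\times$ via $x\mapsto x^{-1}$, exactly as in Proposition~\ref{prop:points_P1}. This yields one copy of $\P^1(K)$ for each $\pm$-orbit, i.e.
\[
\mathcal X(K)\cong\bigsqcup_{j\in\mu}\P^1(K),
\]
with the copy indexed by the $+$-chart label $j$. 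The labels on $\Omega_-$ are a bijective relabeling $j\mapsto -j$ of $\mu$, so no additional identifications occur.

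\textbf{Case analysis.} It remains to count $\mu$.
\begin{enumerate}
\item If $-1$ is not a square in $K$, then $\mu=\varnothing$, and all chart sets, hence the colimit, are empty.
\item If $\operatorname{char}K\neq2$ and $i_K\in K$, then $\mu=\{i_K,-i_K\}$ has two elements, since $i_K\neq -i_K$, giving $\P^1(K)\sqcup\P^1(K)$.
\item If $\operatorname{char}K=2$, then $-1=1$ and $\mu=\{1\}$, since $j^2=1$ forces $(j-1)^2=0$. This gives a single $\P^1(K)$; here $\epsilon\mapsto1$ and the twist $j\mapsto -j$ is trivial.
\end{enumerate}

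The main obstacle is the bookkeeping in the gluing step: one must check that the colimit over the cover identifies points exactly through $\Omega$, and that the twist by $\epsilon$ matches labels $j$ with $-j$ rather than $j$ with $j$. This twist does not change the count, but it is what makes the identification canonical: the copies are indexed by the value of $J_+$, equivalently by a choice of $\sqrt{-1}$ on the $+$ chart.
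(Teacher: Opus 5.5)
Your proposal is correct and follows the paper's proof essentially step by step. You compute the chart points as $K\times\mu$ with $\mu=\{j\in K:\ j^2=-1\}$, glue via $(t,j)\sim(t^{-1},-j)$ coming from $J_-\mapsto\epsilon J$, and count $\mu$ in the three cases, which is what the paper does; your added justifications (that $J^2=\epsilon$ imposes only $j^2=-1$, and that $j=1$ is the unique solution in characteristic $2$) fill in details the paper leaves implicit.
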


\begin{proof}
Since $\mathcal X$ is obtained by gluing the affine charts $\Omega_+$ and $\Omega_-$
along the generic open subset $\Omega$, its set of $K$-valued points is
obtained by gluing the corresponding affine sets of $K$-points.
Every $\Ftwo$-algebra homomorphism
\[
\phi:\mathcal O(U)\longrightarrow HK
\]
satisfies
$
\phi(\epsilon)=-1_K$. 
Consequently, the defining relation
$
J^2=\epsilon
$
is evaluated in $K$ as
$
j^2=-1_K$.
A $K$-valued point of the affine chart $\Omega_+$ is therefore determined by
a pair:
\[
(t,j_+)\in K\times K
\]
satisfying
$
j_+^2=-1_K$. 
Similarly, a point of $\Omega_-$ is determined by a pair
\[
(s,j_-)\in K\times K
\]
with
$
j_-^2=-1_K$.

If $K$ contains no square root of $-1$, then neither affine chart possesses
$K$-points. Hence
\[
\Omega_+(K)=\varnothing=\Omega_-(K),
\]
and therefore
$
\mathcal X(K)=\varnothing$.

Assume now that $\operatorname{char}(K)\neq2$ and that
$
-1\in (K^\times)^2$. 
Then $K$ contains exactly two square roots of $-1$, denoted
$\pm i_K$. 
Accordingly,
\[
\Omega_+(K)=K\times\{i_K,-i_K\},
\]
\[
\Omega_-(K)=K\times\{i_K,-i_K\},
\]
and
\[
\Omega(K)=K^\times\times\{i_K,-i_K\}.
\]
The restriction maps
\eqref{eq:restriction_maps}
identify the two affine charts through
\begin{equation}\label{eq:restriction_maps1}
T\longmapsto T^{-1},
\qquad
J_+\longmapsto J,
\qquad
J_-\longmapsto\epsilon J.
\end{equation}
Since $\phi(\epsilon)=-1_K$, the last relation becomes
$
j_-= -\,j_+$. 
Hence the gluing identifies
\[
(t,j)\sim(t^{-1},-j),
\]
Consequently,  the set of $K$-valued points decomposes canonically into two disjoint
copies of $\P^1(K)$.
\[
\mathcal X(K)\cong
\P^1(K)\sqcup\P^1(K),
\]
where the first copy is obtained by gluing
$
K\times\{i_K\}_+$
with
$K\times\{-i_K\}_-$, 
and the second by gluing
$
K\times\{-i_K\}_+$ with $K\times\{i_K\}_-$.

Finally, if $\operatorname{char}(K)=2$, then: 
$
-1_K=1_K$, 
and the equation
$
j^2=-1_K
$
has the unique solution: $
j=1$. 
The twist by $\epsilon$ therefore becomes trivial, and the two affine charts
glue in the usual way through
\[
t\longleftrightarrow t^{-1},
\]
recovering a single copy of the projective line:
$
X(K)\cong\P^1(K)$.\vspace{.05in}

Thus the doubled projective line over fields containing $\sqrt{-1}$, and the
absence of real points, arise as direct consequences of the signed relation
$J^2=\epsilon$.
\end{proof}

\section{The Geometric Symmetry and the Equivariant Topos}

The absolute projective line $\mathcal X=(\P^1_{\Ftwo},\mathcal O)$ introduced in \eqref{X} carries a canonical involutive symmetry, reflecting the
absence of a distinguished ordering of the rank-one abelian group $\Z$. This
symmetry exchanges the two affine charts $\Omega_\pm$ and \emph{simultaneously} implements the
signed inversion underlying the archimedean geometry.

\begin{definition}\label{def:alpha_symmetry}
Let
$
\alpha:\mathcal X\longrightarrow \mathcal X
$
be the involutive automorphism defined on the underlying topological space by
interchanging the two closed points,
\[
\alpha(+)= -,\qquad
\alpha(-)= +,
\]
while fixing the generic point
$
\alpha(\eta)=\eta$.

On the generic affine open subset $\Omega$, the induced automorphism of the
structure sheaf
\[
\alpha^*:\mathcal O(\Omega)\longrightarrow\mathcal O(\Omega)
\]
is determined by
\[
\alpha^*(T)=\epsilon T^{-1},
\qquad
\alpha^*(J)=J^{-1}=\epsilon J.
\]
The first formula realizes the signed inversion of the spatial coordinate,
while the second exchanges the two branches determined by the relation
$J^2=\epsilon$. (Note that
$
J(\epsilon J)=\epsilon J^2=\epsilon^2=1$, 
so that
$
J^{-1}=\epsilon J$.)

The generic automorphism extends uniquely to isomorphisms of the affine
charts
\[
\alpha^*_\pm:
\mathcal O(\Omega_\mp)
\longrightarrow
\mathcal O(\Omega_\pm),
\]
compatible with the restriction maps
\eqref{eq:restriction_maps}. Explicitly,
\[
\alpha_+^*(T^{-1})=\epsilon T,
\qquad
\alpha_+^*(J_-)=J_+,
\]
and
\[
\alpha_-^*(T)=\epsilon T^{-1},
\qquad
\alpha_-^*(J_+)=J_-.
\]
\end{definition}

The compatibility of the local and generic actions follows immediately from
the definitions. Indeed, restricting $\alpha_+^*(J_-)$ to the generic open
subset $\Omega$ gives
$
J$, 
while
\[
\alpha^*(\rho_-(J_-))
=
\alpha^*(\epsilon J)
=
\epsilon(\epsilon J)
=
J.
\]
The remaining compatibility relations are verified in the same way.

The action of $\alpha$ on the imaginary generator,
\begin{equation}\label{epsi}
\alpha^*(J)=J^{-1}=\epsilon J,
\end{equation}
is uniquely determined by the geometry. \vspace{.05in}

This naturally leads to the main geometric object at the
archimedean place, namely  \emph{the equivariant topos} endowed with structure sheaf:
\begin{equation}\label{eqtop}
\minf:=[\mathcal X/\langle\alpha\rangle].
\end{equation}

If $K$ is a field as in 
Proposition~\ref{prop:points_X_K}, case \emph{2}., then $\mathcal X(K)\cong \P^1(K)\sqcup\P^1(K)$ and  the involution $\alpha$
interchanges the two copies of $\P^1(K)$.
Indeed, if a $K$-valued point $\phi$ belongs to the first copy, then the equalities
\[
(\alpha\cdot\phi)(J_-)
=
\phi(\alpha_+^*(J_-))
=
\phi(J_+),
\]
show that  $\alpha\cdot\phi$ belongs to the second copy, and conversely.
Passing to the quotient therefore identifies the two copies canonically:
\[
\minf(K)
=
[\mathcal X(K)/\langle\alpha\rangle]
\cong
\P^1(K).
\]
 Arithmetically,
\eqref{epsi} ensures that the $\alpha$-invariant part of the generic stalk is precisely
the base $\Ftwo$, preventing the additional archimedean structure to
appear at the finite primes.\vspace{.03in}

The underlying topological space of 
$
\minf
$
consists of  two points. The involution $\alpha$ acts freely on the two
closed points $\{+,-\}$, identifying them with a single closed point $\gamma$. The
generic point $\eta$ is fixed, but acquires isotropy group $\Z/2\Z$, thereby
becoming an orbifold point. Geometrically, the quotient folds the absolute
projective line by identifying the two orderings of $\Z$.

\section{Complex Conjugation and the Twistor Real Structure}

To determine the real structure of the equivariant topos
$\minf$ (see \eqref{eqtop}) at the archimedean place, we first analyze the natural
action of the Galois group
$
\Gal(\C/\R)=\{1,c\}
$
on the complex points of $\mathcal X$ (see \eqref{X}). Every automorphism of the
ground field acts on the set of field-valued points by post-composition.

Let
$
c:\C\longrightarrow\C
$
denote  the complex conjugation. If
$
\phi\in \mathcal X(\C),
$
its conjugate point is defined by
\[
c\cdot\phi:=c\circ\phi.
\]
Recall the canonical decomposition
\begin{equation}\label{twop1}
\mathcal X(\C)\cong
\P^1(\C)_1\sqcup\P^1(\C)_2,
\end{equation}
where $\P^1(\C)_1$ is obtained by gluing
\[
\C\times\{i\}_+
\qquad\text{with}\qquad
\C\times\{-i\}_-,
\]
while $\P^1(\C)_2$ is obtained by gluing
\[
\C\times\{-i\}_+
\qquad\text{with}\qquad
\C\times\{i\}_-.
\]
Let
$
\phi\in\C\times\{i\}_+\subset\Omega_+(\C)$, 
so that
\[
\phi(T)=z,
\qquad
\phi(J_+)=i.
\]
Then
\begin{align}
(c\cdot\phi)(T)&=\overline z,\label{phip}\\
(c\cdot\phi)(J_+)&=-i.\nonumber
\end{align}
Hence
\[
c\cdot\phi\in
\C\times\{-i\}_+\subset\Omega_+(\C),
\]
and therefore belongs to the second copy
$\P^1(\C)_2$.
More generally, complex conjugation acts by
\[
z\times \{\pm i\}_\pm
\longmapsto
\bar z\times \{-\pm i\}_\pm,
\]
thereby exchanging the two copies of
$\P^1(\C)$ appearing in~\eqref{twop1}.
Passing to the quotient
\[
\minf(\C)
=
[\mathcal X(\C)/\langle\alpha\rangle],
\]
the two copies are identified by the geometric involution $\alpha$, while
complex conjugation descends to an anti-holomorphic involution of the
resulting projective line. 
The geometric involution $\alpha$ and the arithmetic involution $c$ act
independently on $\mathcal X(\C)$ and commute with one another. Consequently, the
Galois action descends to the quotient
$\minf(\C)$, endowing the latter with its canonical real
structure.

\begin{theorem}
\label{thm:twistor_space_scheme}
The canonical action of the complex conjugation
$
c\in\Gal(\C/\R)
$
on the $\C$-valued points of the equivariant topos
$\minf$ induces the twistor real structure. More precisely,
on the resulting complex projective line the induced real structure is the
fixed-point-free anti-holomorphic involution:
\[
z\longmapsto-\frac1{\bar z}.
\]
\end{theorem}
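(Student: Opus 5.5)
The plan is to make the identification $\minf(\C)\cong\P^1(\C)$ explicit by choosing a canonical fundamental domain for $\alpha$, and then to compute the descended conjugation in that coordinate.

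\textbf{Step 1 (coordinate on the quotient).} By the discussion following \eqref{eqtop}, $\alpha$ interchanges the two copies in \eqref{twop1}. Hence every $\alpha$-orbit in $\mathcal X(\C)$ meets $\P^1(\C)_1$ in exactly one point. I will therefore identify $\minf(\C)$ with $\P^1(\C)_1$. On this copy I use the affine coordinate $z=\phi(T)$ on the chart $\C\times\{i\}_+$, and the coordinate $s=\phi(S)=\phi(T^{-1})$ on the chart $\C\times\{-i\}_-$. By Proposition~\ref{prop:points_X_K} these glue via $s=z^{-1}$ to the usual $\P^1(\C)$.

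\textbf{Step 2 (computing the descended involution).} Take $\phi\in\C\times\{i\}_+$ with $\phi(T)=z$ and $\phi(J_+)=i$. By \eqref{phip}, $\psi:=c\cdot\phi$ satisfies $\psi(T)=\bar z$ and $\psi(J_+)=-i$, so $\psi$ lies in $\P^1(\C)_2$. To return to the fundamental domain I apply $\alpha$, using $(\alpha\cdot\psi)(x)=\psi(\alpha_+^*(x))$ for $x\in\mathcal O(\Omega_-)$. By Definition~\ref{def:alpha_symmetry},
\[
(\alpha\cdot\psi)(S)=\psi(\epsilon T)=-\bar z,\qquad (\alpha\cdot\psi)(J_-)=\psi(J_+)=-i.
\]
Thus $\alpha\cdot\psi\in\C\times\{-i\}_-\subset\P^1(\C)_1$, with chart coordinate $s=-\bar z$. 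In the $z$-coordinate this is the point $z'=s^{-1}=-1/\bar z$. The points with $z=0$ and those on the chart $\Omega_-$ (in particular $z=\infty$) are handled by the symmetric computation using $\alpha_-^*$. That computation gives $s\mapsto -1/\bar s$, which is the same map written in the other chart, and it sends $0\leftrightarrow\infty$.

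\textbf{Step 3 (properties of the map).} The map $z\mapsto -1/\bar z$ is anti-holomorphic. It is an involution, since $-1/\overline{(-1/\bar z)}=z$. It is fixed-point-free, because a fixed point would satisfy $z\bar z=-1$, which is impossible; and $0,\infty$ are exchanged rather than fixed. This is the twistor (quaternionic) real structure.

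\textbf{Main obstacle.} The delicate point is well-definedness of the descent. I must check that the result does not depend on the choice of orbit representative, i.e.\ that $c$ and $\alpha$ commute on $\mathcal X(\C)$. Since $c$ acts by post-composition and $\alpha$ by pre-composition with $\alpha^*$, they commute formally: $c\circ(\phi\circ\alpha^*)=(c\circ\phi)\circ\alpha^*$. The other source of error is consistent bookkeeping of the sign $\epsilon\mapsto-1$ in both the gluing $J_-\mapsto\epsilon J$ and in $\alpha^*(T)=\epsilon T^{-1}$. I would double-check this by verifying directly that $\alpha\cdot\psi$ is compatible with the gluing $(t,j)\sim(t^{-1},-j)$ on the overlap $\Omega(\C)$.
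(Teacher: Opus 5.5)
Your proposal is correct and follows essentially the paper's route: take $\P^1(\C)_1$ as a fundamental domain for $\alpha$, apply complex conjugation (which lands in the other copy), and return to $\P^1(\C)_1$ via $\alpha$, which yields $z\mapsto -1/\bar z$. The main difference is that you compute with the chart isomorphisms $\alpha_\pm^*$ rather than the generic $\alpha^*$ on $\Omega$; this treats $0\leftrightarrow\infty$ directly instead of by the continuity argument the paper invokes, and you also check explicitly that $c$ commutes with $\alpha$ and that the map is a fixed-point-free involution.
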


\begin{proof}
 Since $\alpha$ interchanges the two copies of $\P^1(\C)$ in   the decomposition \eqref{twop1}, the orbit of the action of the symmetries on the complex points of the unfolded space $\mathcal X(\C)$ are parametrized by their intersection with the first copy $\P^1(\C)_1$. 
The complex conjugation replaces $z\times \{\pm i\}_\pm$ by $\bar z\times \{-\pm i\}_\pm$. The first copy $\P^1(\C)_1$ is obtained by gluing
$
\C\times\{i\}_+$
with
$\C\times\{-i\}_-$, using the canonical map
\[
\C^\times \to \left(\C\times\{i\}_+\right)\times \left(\C\times\{-i\}_-\right), \  \ z\mapsto ((z,i),(z^{-1},-i)).
\]
corresponding to the restriction maps \eqref{eq:restriction_maps1}. We can thus parametrize the points of $\P^1(\C)_1$ by the bijection\footnote{This bijection extends \emph{by continuity} to $\C\cup\{\infty\}$ the parametrization of points belonging to the Zariski dense open set $\Omega(\C)$.}
\[
P:\C\cup \{\infty\}\to \P^1(\C)_1, \  z\mapsto z\times \{ i\}_+, \ \infty  \mapsto 0 \times  \{ -i\}_-.
\]
For $z\in \C^\times$, the point $P(z)$ corresponds to the homomorphism 
\[
\phi: \mathcal O(\Omega)\to H\C, \ \phi(T)=z, \ \phi(J)=i.
\]
For $z\in \C^\times$, the orbit of the point $P(z)$ under the geometric symmetry $\alpha$ is (using Definition \ref{def:alpha_symmetry}) the equivalence class:
\[
[P] = \big\{ P, \, \alpha(P) \big\} = \big\{ (z, i), \, (-z^{-1}, -i) \big\}.
\]
Applying standard complex conjugation $c$ to this orbit yields the conjugate equivalence class:
\[
c([P]) = \big\{ (\bar{z}, -i), \, c(-z^{-1}, -i) \big\} = \big\{ (\bar{z}, -i), \, (-\bar{z}^{-1}, i) \big\}.
\]
To determine the induced action on the quotient space (which we may parameterize by the map $P$), we select the representative of the conjugate orbit that lies on $\P^1(\C)_1$. This representative is exactly $(-\bar{z}^{-1}, i)$. 

Thus, the induced action on the spatial coordinate is $z \mapsto -1/\bar{z}$, and it swaps the points $0$ and $\infty$ (as can be seen by \emph{continuity}).
\end{proof}

This demonstrates that the twistor real structure  is the automatic, functorial consequence of standard complex conjugation acting on the geometrically folded absolute projective line. The lack of real points at infinity is  guaranteed by the spatial sign twist $\epsilon$ inherent in the Weyl symmetry $\alpha$.

\section{Frobenius and Twistor Equivariance}

The intrinsic Hopf structure of spherical $\Ftwo$-algebras established in Section~\ref{sect2} is the  mechanism that generates the geometry of the archimedean twistor line. 

First, consider the absolute Frobenius endomorphism $\frob_n(x) = x^n$ acting on a spherical algebra. Under the intrinsic convolution product \eqref{conv}, the $n$-th power Frobenius of a $A$-rational point $\phi \in \Hom_{\Ftwo}(\Ftwo[M], A)$ is: $$\phi^n(m) = \phi(m^n) = \phi(\frob_n(m)).$$ 
Thus,  $\frob_n$ is the canonical geometric $n$-th power map in the intrinsic monoid of points. This shows why the Frobenius manifests dynamically as $z \mapsto z^n$ on the spatial coordinate of the twistor line, and also why it \emph{geometrically implements the Adams operations} $\psi^n$.\vspace{.03in}

Second,  this Hopf structure provides the absolute algebraic origin for non-abelian Hodge theory. The generic open set $\Omega$ of the absolute curve (\eqref{opens}) has the spatial core $\Ftwo[T^\Z]$. Because the non-zero elements of $M = \Z$ form a group, its geometric complex points $\Omega(\C)$ canonically form the multiplicative group $\C^\times$. This group acts on the entire twistor line by scaling the spatial coordinate via the coproduct mechanism.

In Simpson's twistor construction \cite{Si}, a real Hodge structure is encoded as a \sloppy $\C^\times$-equivariant vector bundle on $\mathbb{P}^1(\C)$, where the real structure $\sigma(z) = -1/\bar{z}$ must satisfy the twisted equivariance:
\begin{equation*}
\sigma(\lambda z) = \frac{-1}{\overline{\lambda z}} = \bar{\lambda}^{-1} \sigma(z) \qquad \text{for } \lambda \in \C^\times.
\end{equation*}
In our absolute framework, the required $\C^\times$-action is not imposed externally; rather it is the canonical action of the generic absolute stalk $\Omega(\C)$ acting via the $\Ftwo$-coproduct as in \S\,\ref{sect2.2}. The compatibility with the geometric symmetry $\alpha^*(T) = \epsilon T^{-1}$ perfectly  recovers this twisted equivariance. 

Consequently, the absolute geometry of $\Ftwo$ dynamically generates the entire architecture of real Hodge structures: the intrinsic Lydakis-Day coproduct yields the $\C^\times$-equivariance, the geometric symmetry $\alpha$ yields the conjugate filtration, and finally  the odd Frobenius $\frob_n$ implements the Adams operations.

\section{The Global Absolute Curve and the Amalgam}\label{sec:global_curve}

With the archimedean component $\minf$ rigorously defined, we can now move to the construction of the global absolute curve $\bar{\mathscr{C}}=(\spzb)_{\Ftwo}$ by amalgamating the finite and archimedean components.\vspace{.05in} 

Let $\mathscr{C}_{\text{fin}}:=(\Spec \Z)_{\Ftwo}$ denote the finite absolute curve, after extension of scalars from $\Fone$ to $\Ftwo$. Topologically, it consists of a countable set of closed points (the finite primes $\mathcal{P}$) and a single generic point $\eta_{\text{fin}}$. The stalk of the structure sheaf at the generic point is exactly the base $\Ftwo$.

On the other hand, the archimedean component $\minf$ consists topologically of a single closed point $\infty$ (the folded points $\{+, -\}$) and an orbifold generic point $\eta_\infty$. Crucially,  the $\alpha$-invariant stalk at $\eta_\infty$ reduces strictly to $\Ftwo$. \vspace{.03in}

Because the stalks at the generic points of both spaces are canonically isomorphic to $\Ftwo$, we can define the global curve as their topological and sheaf-theoretic amalgam.

\begin{definition}\label{def:absolute_curve0}
The global absolute curve $$\bar{\mathscr{C}}:=(\spzb)_{\Ftwo}$$ is the pushout (amalgam) of $\mathscr{C}_{\text{fin}}$ and $\minf$ along their generic points. 
\begin{itemize}
    \item  The underlying topological space $|\bar{\mathscr{C}}|$ is the colimit identifying $\eta_{\text{fin}}$ with $\eta_\infty$:
    \begin{equation*}
|\bar{\mathscr{C}}| = |\mathscr{C}_{\text{fin}}| \coprod_{\eta_{\text{fin}} \sim \eta_\infty} |\minf|.
\end{equation*}
    It consists of the closed points $\mathcal{P} \cup \{\infty\}$ and a single, unified generic point $\eta_{\text{global}}$.
    
    \item The structure sheaf $\mathcal{O}_{\bar{\mathscr{C}}}$ is defined by Artin gluing along the generic point. For any Zariski open set $U$ of $\bar{\mathscr{C}}$, the sections are given by the fiber product of the sections on the finite and archimedean components over the generic stalk $\Ftwo$:
    \begin{equation*}
\mathcal{O}_{\bar{\mathscr{C}}}(U) = \mathcal{O}_{\text{fin}}(U \cap \mathscr{C}_{\text{fin}}) \times_{\Ftwo} \mathcal{O}_{\infty}(U \cap \minf).
 \end{equation*}
\end{itemize}
\end{definition}

This amalgamation is the absolute algebraic realization of the  geometry of $\spzb$. \vspace{.03in}

In classical Arakelov theory, the archimedean place is added formally, as it lacks a true geometric stalk. Here, $\minf$ provides the required geometric stalk. The twistor real structure unifies the complex spheres over the closed point $\infty$, while the orbifold symmetry collapses the generic stalk to $\Ftwo$,  matching the finite generic point and allowing the global curve to glue together.

\subsection{The Absolute Curve in the Odd Arithmetic Topos}

To properly define the global structure of the absolute curve, we must specify the ambient category in which it lives. While in classical algebraic geometry, a curve is a locally ringed space, in absolute geometry over $\Fone$ or $\Ftwo$, the foundational ambient category is the arithmetic topos . The main reason for this is the existence of a canonical, absolute geometric operation: the Frobenius endomorphism.

The objects of the arithmetic topos are sets (sheaves of sets) equipped with an action of the multiplicative monoid of integers. When working over $\Fone$, the relevant topos is $\widehat{\N^\times}$, and the absolute Frobenius endomorphism $\frob_n(x) = x^n$ acts for all $n \in \N^\times$. However, passing to the enriched base $\Ftwo$ introduces an arithmetic subtlety. For $\frob_n$ to be a valid endomorphism of an $\Ftwo$-algebra, it must fix this base algebra pointwise; in particular, it must fix the canonical sign $\epsilon = -1$. This forces a restriction to the submonoid of odd integers, $\N^\times_{\text{odd}}$.\vspace{.03in}

We therefore define the absolute curve not merely as a static ringed space, but as a \emph{dynamical geometric object} internal to this odd arithmetic topos.

\begin{definition}\label{def:absolute_curve}
The global absolute curve $\bar{\mathscr{C}}$ is an object internal to the odd arithmetic topos $\widehat{\N^\times_{\text{odd}}}$. It is defined by the following data:
\begin{itemize}
    \item \textbf{Topological Space:} The underlying space $|\bar{\mathscr{C}}|$ is the amalgam of the finite primes $\mathcal{P}$ and the archimedean closed point $\infty$, glued over a dense generic point $\eta_{\text{global}}$. The monoid $\N^\times_{\text{odd}}$ acts trivially on this  space.
    \item \textbf{Structure Sheaf:} The structure sheaf $\mathcal{O}_{\bar{\mathscr{C}}}$ is a sheaf of $\Ftwo$-algebras equipped with the canonical absolute Frobenius action. For any open set $U$ and any section $s \in \mathcal{O}_{\bar{\mathscr{C}}}(U)$, the monoid $\N^\times_{\text{odd}}$ acts via the endomorphisms:
    \begin{equation*}
    \frob_n(s) = s^n \qquad \text{for all } n \in \N^\times_{\text{odd}}.
    \end{equation*}
\end{itemize}
\end{definition}

By defining the curve internally to the topos, the notion of a global section is fundamentally upgraded. In any topos, the space of global sections of an object $Y$ is defined categorically as the set of morphisms from the terminal object $\mathbf{1}$ to $Y$. In this case:
\begin{equation*}
\Gamma(Y) := \Hom_{\widehat{\N^\times_{\text{odd}}}}(\mathbf{1}, Y).
\end{equation*}
The terminal object $\mathbf{1}$ is the one-point set endowed with the trivial monoid action. Consequently, a morphism $\mathbf{1} \to Y$ corresponds exactly to selecting an element of $Y$ that is equivariant with respect to the trivial action—that is, an element invariant under the monoid. 

Applying this categorical rule to the structure sheaf of the absolute curve, a true global section is not merely a compatible family of local sections, but a family of local sections that are \emph{simultaneous fixed points} of the absolute Frobenius action. This topos-theoretic requirement provides the exact mechanism for global rigidity.

\begin{proposition}\label{prop:global_sections_frobenius}
The space of global sections of the absolute compactified curve $\bar{\mathscr{C}}$ is precisely the base ring $\Ftwo$:
\begin{equation*}
\Gamma(\bar{\mathscr{C}}, \mathcal{O}_{\bar{\mathscr{C}}}) = \Ftwo.
\end{equation*}
\end{proposition}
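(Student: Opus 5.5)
We compute $\Gamma(\bar{\mathscr{C}},\mathcal{O}_{\bar{\mathscr{C}}})=\Hom_{\widehat{\N^\times_{\text{odd}}}}(\mathbf{1},\mathcal{O}_{\bar{\mathscr{C}}}(|\bar{\mathscr{C}}|))$ by describing the sections over the whole space and then imposing Frobenius invariance. The inclusion $\Ftwo\subseteq\Gamma$ is immediate. The base elements $0,1,\epsilon$ are sections over every open set, compatible with all restrictions and with the fiber product of Definition~\ref{def:absolute_curve0}. They are fixed by every $\frob_n$ with $n$ odd, since $0^n=0$, $1^n=1$ and $\epsilon^n=\epsilon$. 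This is exactly the point where the restriction to $\N^\times_{\text{odd}}$ is used: for even $n$ one would get $\epsilon^n=1\neq\epsilon$.

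For the reverse inclusion, we use the fiber-product description
\[
\mathcal{O}_{\bar{\mathscr{C}}}(|\bar{\mathscr{C}}|)=\mathcal{O}_{\text{fin}}(\mathscr{C}_{\text{fin}})\times_{\Ftwo}\mathcal{O}_\infty(\minf).
\]
On the archimedean factor, a global section of the equivariant topos $\minf=[\mathcal X/\langle\alpha\rangle]$ is an $\alpha$-invariant global section of $\mathcal X$. We compute $\Gamma(\mathcal X,\mathcal O)$ as the equalizer of $\mathcal O(\Omega_+)\rightrightarrows\mathcal O(\Omega)\leftleftarrows\mathcal O(\Omega_-)$. An element of $\mathcal O(\Omega_+)$ has the form $\pm T^aJ_+^{e}$ with $a\ge0$, $e\in\{0,1\}$, or is $0$. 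Its restriction is $\pm T^aJ^e$. An element of $\mathcal O(\Omega_-)$ restricts to $\pm T^{b}(\epsilon J)^{e'}$ with $b\le 0$. Comparing monomials in $\Ftwo[T^\Z,J]/(J^2=\epsilon)$ forces $a=b=0$ and $e=e'$. This leaves $\Ftwo\cup\{\pm J\}$ as global sections of $\mathcal X$, where the section $J$ corresponds to the pair $(J_+,\epsilon J_-)$.

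Next we impose $\alpha$-invariance via \eqref{epsi}: $\alpha^*(J)=\epsilon J\neq J$, so $\pm J$ are excluded and the archimedean contribution is exactly $\Ftwo$. This matches the paper's assertion that the $\alpha$-invariant generic stalk is $\Ftwo$. Alternatively, and independently, the Frobenius already kills $J$: $\frob_n(J)=J^n$ equals $\epsilon J$ for $n\equiv3\pmod 4$. So $J$ is not a simultaneous fixed point, which gives a second, topos-theoretic reason for the exclusion.

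On the finite factor, we use the known description from \cite{CC1} of global sections of $(\spz)_{\Fone}$ after extension of scalars to $\Ftwo$. Any global section must restrict to the generic stalk $\Ftwo$, and the sheaf is separated on the irreducible space with generic point $\eta_{\text{fin}}$, so global sections inject into the generic stalk. Hence $\mathcal{O}_{\text{fin}}(\mathscr{C}_{\text{fin}})\subseteq\Ftwo$. The Frobenius-fixed-point condition is then automatic, and the fiber product over $\Ftwo$ of two copies of $\Ftwo$ is $\Ftwo$.

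The main obstacle I expect is making the finite factor precise. It requires that the restriction map from global sections to the generic stalk be injective, and that the generic stalk of $\mathscr{C}_{\text{fin}}$ be exactly $\Ftwo$ rather than something larger such as a spherical algebra over $\Q^\times_+$ of nonzero elements. If the stalk turns out to be larger, I would fall back on the Frobenius invariance as the decisive tool. An element $x$ of a spherical monoid algebra with $x^n=x$ for all odd $n$ must satisfy $x^3=x$, which in a torsion-free-up-to-sign monoid like $\pm\Q^\times_+\cup\{0\}$ forces $x\in\{0,\pm1\}$. The Frobenius-invariant sections thus collapse onto $\Ftwo$ in either case.
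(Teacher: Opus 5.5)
\textbf{The gap is on the finite factor.} You argue that ``the sheaf is separated on the irreducible space with generic point $\eta_{\text{fin}}$, so global sections inject into the generic stalk.'' This inference is not valid. The locality axiom only makes $\Gamma$ inject into the product of \emph{all} stalks. Injectivity into $\mathcal O_\eta$ would require the localization maps $\rho_p:\mathcal O_p\to\mathcal O_\eta$ to be injective, and here they cannot be. The generic stalk is $\Ftwo$, while the stalk at a prime is a larger spherical algebra containing non-constant monomials; that is why the paper needs ``identical reasoning at the finite primes.'' So $\rho_p$ collapses that stalk onto $\Ftwo$.

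A two-point space $\{p,\eta\}$ already shows the failure. Take $\mathcal O(\{p,\eta\})=\Ftwo[T^{\Z_{\ge0}}]$ and $\mathcal O(\{\eta\})=\Ftwo$. Then the global sections do not inject into the generic stalk. Consequently your remark that the Frobenius condition is ``automatic'' on the finite factor is backwards: that is exactly where the paper needs it.

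The repair is the paper's germwise argument:
\begin{enumerate}
\item For a global section $s$, each germ $s_p$ satisfies $s_p^3=s_p$. In a spherical algebra whose nonzero part is $\{\pm1\}$ times a torsion-free monoid, this forces $s_p\in\{0,1,\epsilon\}$.
\item Since $\rho_p$ is the identity on $\Ftwo$, compatibility gives $s_p=\rho_p(s_p)=s_\eta$ for every $p$.
\item A section of a sheaf of sets is determined by its germs, so $s$ is the constant section $s_\eta$.
\end{enumerate}
Your fallback targets a different failure mode, namely a generic stalk larger than $\Ftwo$. It becomes the right fix only when applied to the germs at the closed points and combined with step 3.

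\textbf{Your archimedean half is correct and differs from the paper's.} The paper never computes $\Gamma(\mathcal X,\mathcal O)$. It takes the germ at $\infty$ in the chart algebra $\Ftwo[T^{\Z_{\ge0}},J_+]/(J_+^2=\epsilon)$ and kills $T$ and $J_+$ with the odd Frobenius, using $T^n\neq T$ and $J_+^3=\epsilon J_+\neq J_+$. You instead compute $\Gamma(\mathcal X,\mathcal O)=\{0,1,\epsilon,J,\epsilon J\}$ as an equalizer, where $J$ is the pair $(J_+,\epsilon J_-)$, and then use $\alpha^*(J)=\epsilon J$. This shows more: $\mathcal O_\infty(\minf)=\Ftwo$ already follows from the equivariant structure, and Frobenius gives an independent second reason. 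In exchange, the paper's route treats all places by one mechanism and never needs injectivity of restriction maps, which is exactly what your finite-factor argument was missing.
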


\begin{proof}
Let $s \in \Gamma(\bar{\mathscr{C}}, \mathcal{O}_{\bar{\mathscr{C}}})$ be a global section. Topologically, $s$ defines a local germ $s_v \in \mathcal{O}_v$ at every closed point $v \in \mathcal{P} \cup \{\infty\}$, as well as a generic germ $s_\eta \in \mathcal{O}_\eta$ at the dense generic point $\eta_{\text{global}}$. 

The sheaf compatibility condition requires that for every closed point $v$, the canonical localization map $\rho_v: \mathcal{O}_v \to \mathcal{O}_\eta$ must send the local germ to the generic one:
\begin{equation}\label{eq:sheaf_compat}
\rho_v(s_v) = s_\eta.
\end{equation}

Because the curve is defined internally to the arithmetic topos, a global section is a morphism from the terminal object, which means it must be a fixed point of the odd absolute Frobenius action. Thus, at every place $v$, we  have $\frob_n(s_v) = s_v^n = s_v$ for all $n \in \N^\times_{\text{odd}}$.

As shown previously, at the archimedean place, the local stalk is modeled by $$\Ftwo[T^{\Z_{\geq 0}}, J_+] / (J_+^2 = \epsilon).$$ The condition $s_\infty^n = s_\infty$ for all odd $n \geq 3$ forces the spatial and imaginary components to vanish (since $T^n \neq T$ and $J_+^3 = -J_+ \neq J_+$). Therefore, the Frobenius fixed points of the stalk reduce strictly to the base ring: $s_\infty \in \Ftwo$. By identical reasoning at the finite primes, $s_p \in \Ftwo$ for all $p \in \mathcal{P}$.

We now apply the sheaf compatibility condition \eqref{eq:sheaf_compat}. Because the localization maps $\rho_v$ are $\Ftwo$-algebra homomorphisms, they act as the identity on the base ring $\Ftwo$. Since $s_v \in \Ftwo$, we have:
\begin{equation*}
s_v = \rho_v(s_v) = s_\eta \quad \text{for all } v \in \mathcal{P} \cup \{\infty\}.
\end{equation*}
This forces $s_v$ to be the exact same constant in $\Ftwo$ at every place, uniquely determined by the generic germ $s_\eta$. Thus, the global sections are exactly the constant sections $\Ftwo$.
\end{proof}

\begin{remark}
While the absolute curve is defined via the internal topos perspective, evaluating this structure on geometric points (e.g., over $\C$) naturally recovers the noncommutative dynamical perspective. The canonical Frobenius action on the abstract $\Ftwo$-algebra induces the branched dynamical system $z \mapsto z^n$ and $z \mapsto -1/z^n$ on the complex twistor line. The noncommutative crossed product space $\bar{\mathscr{C}}(\C) \rtimes \N^\times_{\text{odd}}$ is therefore a derived geometric consequence of the canonical internal topos structure defined above.
\end{remark}

\subsection{The Odd Monoid Action on Twistor Line and  Ramification at 2}

The restriction of the absolute Frobenius to the odd monoid $\N^\times_{\text{odd}}$ has an important geometric manifestation on the archimedean twistor line. Because the $\Ftwo$-enrichment adjoins an imaginary generator $J$ satisfying $J^2 = \epsilon = -1$, it acts as the absolute geometric avatar of the Gaussian integers $\Z[i]$. 

The prime $2$ is uniquely ramified in $\Z[i]$, while the behavior of odd primes is governed by the non-trivial Dirichlet character modulo $4$. This exact arithmetic structure emerges dynamically on the complex points of the absolute curve. The exclusion of the even integers from the monoid action corresponds precisely to the ramification of the prime 2, while the odd monoid induces a branched dynamical system on the unified twistor line.\vspace{.05in}

Let  $\minf(\C) \cong \mathbb{P}^1(\C)$  be the unified complex twistor line, obtained by quotienting the $\Ftwo$-scheme $\mathcal X(\C)$ (see \eqref{X}) by the geometric symmetry $\alpha$.

\begin{proposition}\label{prop:twistor_frobenius}
 The odd arithmetic monoid $\N^\times_{\text{odd}}$ acts on $\minf(\C)$ via the dynamical system:
\begin{equation*}
\frob_n(z) := 
\begin{cases} 
z^n & \text{if } n \equiv 1 \pmod 4, \\
-\frac{1}{z^n} & \text{if } n \equiv 3 \pmod 4.
\end{cases}
\end{equation*}
Furthermore, this action strictly commutes with the quaternionic real structure $$\sigma(z) = -1/\bar{z}.$$
\end{proposition}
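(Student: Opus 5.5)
We compute the action of $\frob_n$ on complex points of the unfolded line $\mathcal X$ and then pass to the quotient by $\alpha$, using the parametrization $P:\C\cup\{\infty\}\to\P^1(\C)_1$ from the proof of Theorem~\ref{thm:twistor_space_scheme}.

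\emph{Step 1: the action on points.} On the generic chart $\Omega$, the endomorphism $\frob_n$ of $\mathcal O(\Omega)=\Ftwo[T^{\Z},J]/(J^2=\epsilon)$ sends $T\mapsto T^n$ and $J\mapsto J^n$. Since $n$ is odd, $\epsilon$ is fixed, so $\frob_n$ is an $\Ftwo$-algebra map. Moreover $J^n=\epsilon^{(n-1)/2}J$, which equals $J$ if $n\equiv1\pmod 4$ and $\epsilon J$ if $n\equiv 3\pmod 4$. The same formulas hold on the charts $\Omega_\pm$ (with $J_\pm$), and they commute with the restriction maps \eqref{eq:restriction_maps}: for instance $\rho_-(J_-^n)=(\epsilon J)^n=\epsilon J^n$ because $n$ is odd. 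Hence $\frob_n$ defines an endomorphism of $\mathcal X$. It acts on a point $\phi$ by precomposition, $\phi\mapsto\phi\circ\frob_n$. For $P(z)=(z,i)$ with $z\in\C^\times$ this gives $(z^n,i^n)$, which is $(z^n,i)$ when $n\equiv1$ and $(z^n,-i)$ when $n\equiv 3\pmod 4$.

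\emph{Step 2: descent to the quotient.} We must check that $\frob_n$ commutes with $\alpha$, so that it descends to $\minf(\C)$. On generators we have $\alpha^*\frob_n(T)=\epsilon^nT^{-n}=\epsilon T^{-n}=\frob_n\alpha^*(T)$, using that $n$ is odd, and similarly $\alpha^*(J^n)=(\epsilon J)^n=\epsilon^n J^n=\frob_n(\epsilon J)$. Next we choose the representative in $\P^1(\C)_1$. In case $n\equiv1$ the point $(z^n,i)$ already lies there, so $z\mapsto z^n$. In case $n\equiv3$ the orbit of $(z^n,-i)$ under $\alpha$ is $\{(z^n,-i),(-z^{-n},i)\}$, exactly as computed in the proof of Theorem~\ref{thm:twistor_space_scheme}; the representative in $\P^1(\C)_1$ is $(-z^{-n},i)$, so $z\mapsto -1/z^n$. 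The points $0$ and $\infty$ are handled by continuity, following the footnote convention of that proof. Composition also follows: the map $n\mapsto\frob_n$ is multiplicative on the algebra side, and the residues mod $4$ multiply accordingly. One checks directly that $(-1/z^{m})^n=-1/z^{mn}$ when $n$ is odd, and that $-1/(-1/z^m)^n=z^{mn}$ when both $m$ and $n$ are $\equiv3$, so the action is a genuine monoid action.

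\emph{Step 3: commutation with $\sigma$.} The cleanest route is that $\frob_n$ is defined over $\Ftwo$, so it commutes with postcomposition by $c$; together with Step 2 this gives commutation with the descended involution $\sigma$ of Theorem~\ref{thm:twistor_space_scheme}. As a check, we verify it directly. For $n\equiv1$: $\sigma(z^n)=-1/\bar z^n$, and $(-1/\bar z)^n=-1/\bar z^n$ since $n$ is odd. For $n\equiv3$: $\frob_n(\sigma z)=-1/(-1/\bar z)^n=\bar z^n$, while $\sigma(-1/z^n)=\bar z^n$.

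The main obstacle is Step 2, specifically confirming that the choice of representative is consistent on the whole orbit and at the poles $0,\infty$. There the parametrization relies on continuity rather than on the generic chart, so we will verify it directly on $\Omega_\pm$ using the images of $J_\pm$.
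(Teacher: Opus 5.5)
Your proposal is correct and takes essentially the same route as the paper: you compute $\frob_n(z,j)=(z^n,j^n)$ on $\mathcal X(\C)$, use $\alpha(w,j)=(-1/w,-j)$ to return to $\P^1(\C)_1$ when $n\equiv 3 \pmod 4$, and verify commutation with $\sigma$ case by case. Your additional checks are also correct and fill in points the paper leaves implicit. These are that $\frob_n$ is an $\Ftwo$-algebra map compatible with the restriction maps and with $\alpha$ (so it descends to the quotient), that the resulting maps form a genuine monoid action, and the conceptual argument $c\circ(\phi\circ\frob_n)=(c\circ\phi)\circ\frob_n$ for commutation with $\sigma$.
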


\begin{proof}
On the unquotiented space $\mathcal X(\C) = \mathbb{P}^1(\C)_+ \sqcup \mathbb{P}^1(\C)_-$, a complex point is determined by a spatial coordinate $z \in \C \cup \{\infty\}$ and an imaginary coordinate $j \in \{i, -i\}$. The algebraic Frobenius $\frob_n(x) = x^n$ acts on these coordinates as:
\[
\frob_n(z, j) = (z^n, j^n) = \big(z^n, (-1)^{\frac{n-1}{2}} j\big).
\]
If $n \equiv 1 \pmod 4$, the imaginary coordinate is preserved, meaning $\frob_n$ maps each sphere to itself via $z \mapsto z^n$. 

If $n \equiv 3 \pmod 4$, the imaginary coordinate is inverted ($j \mapsto -j$), meaning the point is mapped to the opposite sphere. To evaluate this action on the quotient space $\minf(\C)$, we must identify the target point with its representative on the original sphere by applying the geometric symmetry $\alpha$. By Definition \ref{def:alpha_symmetry}, $\alpha^*(T) = \epsilon T^{-1}$, which induces the geometric map $\alpha(w) = -1/w$ on the complex coordinate. Composing the jump with this identification yields the effective action:
\[
z \longmapsto z^n \stackrel{\alpha}{\longmapsto} -\frac{1}{z^n}.
\]
To verify compatibility with the twistor real structure $\sigma(z) = -1/\bar{z}$, we check both cases. For $n \equiv 1 \pmod 4$:
\[
\sigma(z^n) = -\frac{1}{\overline{z^n}} \qquad \text{and} \qquad (\sigma(z))^n = \left(-\frac{1}{\bar{z}}\right)^n = -\frac{1}{\overline{z^n}} \quad \text{(since $n$ is odd)}.
\]
For $n \equiv 3 \pmod 4$:
\[
\sigma\left(-\frac{1}{z^n}\right) = -\frac{1}{\overline{(-1/z^n)}} = \bar{z}^n \qquad \text{and} \qquad -\frac{1}{(\sigma(z))^n} = -\frac{1}{(-1/\bar{z})^n} = \bar{z}^n.
\]
Thus, the branched dynamical system perfectly preserves the quaternionic structure.
\end{proof}

This proposition shows that the absolute Frobenius action on the archimedean component is not merely a formal algebraic operation, but a highly structured geometric endomorphism. The fact that the map $z \mapsto -1/z^n$ naturally emerges for $n \equiv 3 \pmod 4$ shows that the $\Ftwo$-twistor sheaf intrinsically "knows" about complex conjugation and the non-trivial topology of the archimedean place.


\begin{thebibliography}{9}

\bibitem{CC1} A. Connes, C. Consani, \emph{\href{https://arxiv.org/pdf/2606.06604}{On the absolute geometry of $\overline{\text{Spec}\,\mathbf Z}$ and the Fargues-Fontaine curve}}, Preprint (2026).  

\bibitem{CC2} A. Connes, C. Consani, \emph{\href{https://arxiv.org/pdf/2602.15941}{On the Jacobian of $\overline{\text{Spec}\,\mathbf Z}$}}, to appear in Journal of Noncommutative Geometry. Preprint (2026), 

\bibitem{CC3} A. Connes, C. Consani, \emph{Riemann-Roch for the ring $\mathbf{Z}$}, C. R. Math. Acad. Sci. Paris \textbf{362} (2024), 229-235.

\bibitem{CC4} A. Connes,  C. Consani, \emph{Cyclic theory and the pericyclic category}, in  Cyclic cohomology at 40: achievements and future prospects, 103--122, Proc. Sympos. Pure Math., \textbf{105}, Amer. Math. Soc., Providence, RI, (2023). 

\bibitem{CC5} A. Connes, C. Consani, \emph{Riemann-Roch for $\overline{\operatorname{Spec} \mathbf{Z}}$}, Bulletin des Sciences Mathematiques \textbf{187} (2023).

\bibitem{CC6} A. Connes, C. Consani, \emph{Geometry of the Scaling Site}, Selecta Math. (N.S.) \textbf{23} (2017), no. 3, 1803--1850.

\bibitem{CC7} A. Connes, C. Consani, \emph{Absolute Algebra and Segal's $\Gamma$-Rings: au Dessous de $\overline{Spec~\mathbf Z}$}, J. Number Theory {\bf 162} (2016), 518--551.

\bibitem{CC8} A. Connes,  C. Consani, \emph{Cyclic homology, Serre's local factors and  $\lambda$-operations}, 
J. K-Theory  \textbf{14} (2014), no. 1, 1--45.

\bibitem{CC9} A. Connes, C. Consani, \emph{On the Notion of Geometry over $\mathbf{F}_1$}, J. Algebraic Geometry \textbf{20} (2011), no. 3, 525-557.

\bibitem{CC10} A. Connes, C. Consani, \emph{Schemes over $\mathbf F_1$ and Zeta Functions}, Compositio Math. {\bf 146} (2010), no. 6, 1383--1415. 

\bibitem{H} L. Hesselholt, \emph{Topological Hochschild homology and the Hasse-Weil zeta function}, in An alpine bouquet of algebraic topology, 157–180, Contemp. Math., \textbf{708}, Amer. Math. Soc., (Providence), RI, (2018). 

\bibitem{S}
P. Scholze,  \emph{$p$-adic geometry}, Proceedings of the International Congress of Mathematicians—Rio de Janeiro 2018. Vol. I. Plenary lectures, 899–933, World Sci. Publ., Hackensack, NJ, 2018.

\bibitem{Si} C. Simpson, \emph{The Hodge filtration on nonabelian cohomology}, Algebraic Geometry- Santa Cruz 1995, Proc. Sympos. Pure Math., vol. {\bf 62}, Amer. Math. Soc., Providence, RI, 1997, pp. 217-281.
\end{thebibliography}
\end{document}